\documentclass[12pt]{article}
\usepackage{setspace}
\usepackage{textcomp}
\usepackage[dvips]{color}
\usepackage{epsfig}
\usepackage{amsmath, amsthm, amssymb}
\usepackage[left=1.5in,top=.9in, 
right=1in,nohead]{geometry}
\hoffset -6mm
\usepackage{mathrsfs}
\usepackage{rawfonts}
\usepackage{graphicx}
\usepackage{comment}
\usepackage{float}
\usepackage{todonotes}
\usepackage[all]{xy}
\usepackage[sc,osf]{mathpazo}  
\usepackage{multirow}
\usepackage{pgf,tikz}
\usetikzlibrary{arrows,cd,matrix,decorations.pathmorphing}
\usetikzlibrary{babel}            
\usepackage{hyperref}

\usepackage{bmpsize}

\usepackage{comment}

\def\bct{\begin{center}}
\def\ect{\end{center}}
\def\bit{ \begin{itemize} }
\def\eit{ \end{itemize} }
\def\beg{\begin}

\def\<{\langle}
\def\>{\rangle}

\def\mbb{\mathbb}
\def\mbbcp{\mathbb{CP}}

\def\ni{\noindent}

\def\tcm{\textcolor{magenta}}
\def\tn{\textnormal}

\newtheorem{thm}{Theorem}[section]

\newtheorem{prop}[thm]{Proposition}

\newtheorem{rmk}[thm]{Remark}

\setcounter{MaxMatrixCols}{10}

\theoremstyle{plain}

\newtheorem{remark}{Remark}

\numberwithin{equation}{section}

\title
{Minimal submanifolds and stability in 
Einstein manifolds}
\author{Mustafa Kalafat \and \"{O}zg\"{u}r Kelek\c{c}i 
\and Mert Ta\c{s}demir
}

\begin{document}
\maketitle
\hfill  {\em To the memory of 
Jim Simons} 

\begin{abstract}
In this paper, we 
compute the index and nullity for minimal submanifolds 
of some complex 
Einstein spaces. We investigate the stability of these minimal submanifolds and suggest a criterion for instability for some cases. 
We also compute some higher eigenvalues for the Laplacian of the Berger spheres and provide with an algorithm. 


\vspace{.05in}

\ni {\em Keywords:} Einstein metrics;\,Hermitian metrics;\,
minimal submanifold;\,Laplacian. 

\vspace{.05in}

\ni {\em Mathematics Subject Classification 2010:} Primary 53A10
; Secondary 
35P05,53C25.  
\end{abstract}

\maketitle

\section{Introduction}

 Minimal submanifolds are an intriguing and fundamental concept in differential geometry, characterized by the property that they locally minimize area. These submanifolds are critical points for the area functional, meaning their mean curvature vector vanishes at every point. The study of minimal submanifolds has profound implications in various fields, including geometric analysis, mathematical physics, and general relativity.
In the 20th century, the field experienced significant advances. 
Simons' work bridged differential geometry and global analysis, significantly influencing the study of minimal submanifolds.
\vspace{2mm}
\par This paper investigates minimal submanifolds within specific Einstein spaces, notably the complex projective plane $\mathbb{CP}_2$ endowed with the Fubini-Study metric, the Page space as 
its blow-up with the Einstein-Hemitian metric. 
Our focus is on calculating the indices and nullities of these submanifolds, providing a deeper understanding of their geometric properties. The concept of index, introduced by 
\cite{simons68}, is defined as the 
sum of the dimensions of the eigenspaces corresponding to the negative eigenvalues  of the Jacobi operator. Simons also introduced the notion of nullity, defined as 
the dimension of the eigenspace corresponding to the zero eigenvalue of the Jacobi operator. 
In his seminal work, Simons computed these quantities for minimal submanifolds in round spheres, establishing the following key result.
\begin{thm}\cite{simons68} A totally geodesic sphere $\mbb S^p$ immersed in a round $\mbb S^n$ has 
$$\tn{index}=n-p \tn{ ~~and~~ nullity}=(n-p)(p+1).$$     
\end{thm}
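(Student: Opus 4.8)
The plan is to analyze the Jacobi (second variation) operator directly, exploiting that for a totally geodesic submanifold the second fundamental form vanishes, so the Simons-type term drops out entirely. For a minimal $\mbb{S}^p \subset \mbb{S}^n$ of constant curvature $1$, the second variation of area in the direction of a normal field $V$ reduces to
\[
Q(V,V) = \int_{\mbb{S}^p}\!\left( |\n^\perp V|^2 - p\,|V|^2 \right) d\mu,
\]
since on a space form of curvature $1$ one computes $\bar R(e_i,V)V = |V|^2 e_i$ and hence $\sum_{i=1}^p \langle \bar R(e_i,V)V, e_i\rangle = p\,|V|^2$ for any orthonormal tangent frame $\{e_i\}$. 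The associated Jacobi operator is therefore simply $J(V) = \Delta^\perp V - p\,V$, where $\Delta^\perp = (\n^\perp)^*\n^\perp$ is the nonnegative connection Laplacian on the normal bundle.

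Next I would exploit the geometry of the normal bundle. Realizing $\mbb{S}^n$ as the unit sphere in $\mbbr^{n+1}$ and $\mbb{S}^p$ as its intersection with the linear subspace $\mbbr^{p+1} = \mathrm{span}(e_1,\dots,e_{p+1})$, the constant vectors $e_{p+2},\dots,e_{n+1}$ are everywhere orthogonal both to the position vector and to $\mbbr^{p+1}$; they thus furnish a global orthonormal frame for the normal bundle, and a short computation with the Gauss formula shows this frame is parallel for $\n^\perp$. Consequently the normal bundle is flat and trivial, and writing $V = \sum_{\alpha=p+2}^{n+1} f_\alpha\,e_\alpha$ the operator decouples as $\Delta^\perp V = \sum_\alpha (\Delta f_\alpha)\,e_\alpha$, where $\Delta$ is the nonnegative scalar Laplacian of the round $\mbb{S}^p$. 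The spectral problem for $J$ thereby reduces to $n-p$ independent copies of the scalar equation $\Delta f = (\mu + p)\,f$.

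The final step is a counting argument using the known spectrum of the round sphere $\mbb{S}^p$: the Laplacian eigenvalues are $\lambda_k = k(k+p-1)$ for $k \ge 0$, with $\lambda_0 = 0$ of multiplicity $1$ (constants) and $\lambda_1 = p$ of multiplicity $p+1$ (restrictions of linear functions). A normal field contributes a \emph{negative} eigenvalue of $J$ precisely when its scalar components satisfy $\lambda < p$, which forces $\lambda = \lambda_0 = 0$; this gives one negative mode per normal direction, so $\text{index} = (n-p)\cdot 1 = n-p$. Similarly $J$ has a \emph{zero} eigenvalue exactly when the components realize $\lambda = p = \lambda_1$, contributing the $(p+1)$-dimensional eigenspace in each of the $n-p$ directions, so $\text{nullity} = (n-p)(p+1)$.

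I expect the main obstacle to be the two structural inputs rather than the final count: correctly deriving the curvature term so that the Jacobi operator collapses to $\Delta^\perp - p$, and verifying the parallelism of the canonical normal frame so that the vector-valued problem genuinely diagonalizes into scalar problems. Once these are in place, completeness of the eigenfunction expansion (the spectral theorem on the compact manifold $\mbb{S}^p$) guarantees that no further negative or null modes are overlooked, and the stated values follow.
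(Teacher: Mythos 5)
Your argument is correct, and it is essentially Simons' original proof: kill the shape-operator term by total geodesy, compute the curvature term in constant curvature to get $J = \Delta^{\perp} - p$, trivialize the normal bundle by the parallel constant frame $e_{p+2},\dots,e_{n+1}$ so the problem splits into $n-p$ scalar copies, and then read off the count from the spectrum $\lambda_k = k(k+p-1)$ of the round $\mbb S^p$ (only $k=0$ gives a negative mode, only $k=1$ gives a null mode, since $\lambda_2 = 2(p+1) > p$). The paper itself does not prove this statement --- it is quoted from \cite{simons68} --- so the natural comparison is with the paper's own Theorem \ref{kalafat_theorem}, which writes $J = -\Delta_M - (s/n)I + Ric^{\perp}$ and claims the $Ric^{\perp}$ term may be dropped when $\bar M$ has constant curvature. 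Your direct computation is the more reliable route here: for $\mbb S^p \subset \mbb S^n$ one has $s/n = n-1$, whereas the correct shift is $-p$; the discrepancy is exactly $Ric^{\perp} = (n-p-1)\,I$, which in constant curvature vanishes only in the hypersurface case $p = n-1$. In other words, your curvature term is the one that actually reproduces Simons' values $\tn{index} = n-p$ and $\tn{nullity} = (n-p)(p+1)$, and it serves as a useful consistency check on (indeed a correction to the constant-curvature clause of) the paper's general formula. The one step you flagged as delicate --- parallelism of the constant normal frame --- is immediate, since a constant vector field on $\mbb R^{n+1}$ has vanishing Euclidean derivative, hence vanishing projection onto the normal bundle, so the decoupling into scalar Laplacians is legitimate and the spectral theorem guarantees no modes are missed.
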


\ni He also proceeds to prove the following lower bounds on the index and nullity of a general minimal submanifold. \vspace{2mm}
\begin{thm}\cite{simons68}\label{simonsunstabilityinsphere} Let $M$ be a $p$-dimensional, closed, minimal submanifold immersed in $\mbb S^n$. Then the following bounds hold. 
\begin{enumerate} \item The index of $M$ is greater than or equal to $(n-p)$, and the equality holds only when $M$ is $\mbb S^n$. 
\item The nullity of M is greater than or equal to $(n-p)(p+1)$, and the equality holds only when $M$ is $\mbb S^n$. 
\end{enumerate}  \end{thm}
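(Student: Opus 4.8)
The plan is to write the Jacobi (stability) operator of a minimal $M^p\subset\mathbb{S}^n$ explicitly and then feed it two distinguished families of normal test fields produced from the flat ambient space $\mathbb{R}^{n+1}\supset\mathbb{S}^n$. Fix a local orthonormal frame $e_1,\dots,e_p$ of $TM$, let $h$ denote the second fundamental form of $M$ in $\mathbb{S}^n$, $x$ the position vector, and $\nu$ the normal bundle of $M$ in $\mathbb{S}^n$. For a normal field $V$ the second variation of area is $I(V,V)=\int_M(|\nabla^\perp V|^2-|A^V|^2-p|V|^2)$, where $|A^V|^2=\sum_{i,j}\langle h(e_i,e_j),V\rangle^2$ and the term $-p|V|^2$ is the curvature contribution of the unit sphere; equivalently $I(V,V)=-\int_M\langle\tilde L V,V\rangle$ with $\tilde L V=\Delta^\perp V+\tilde A V+pV$ and $\tilde A V=\sum_{i,j}\langle h(e_i,e_j),V\rangle h(e_i,e_j)$. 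Then the index is the number of positive eigenvalues of $\tilde L$ and the nullity is $\dim\ker\tilde L$. Decomposing a constant vector $a\in\mathbb{R}^{n+1}$ along $M$ as $a=a^\top+a^\nu+\langle a,x\rangle x$ and differentiating $\nabla_{e_i}a=0$ in $\mathbb{R}^{n+1}$ yields the two structural relations $\nabla^\perp_{e_i}a^\nu=-h(e_i,a^\top)$ and $\nabla^M_{e_i}a^\top=A_{a^\nu}(e_i)-\langle a,x\rangle e_i$, which drive both estimates.

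For the index bound I would use the fields $a^\nu$. Computing $\Delta^\perp a^\nu$ from the first structural relation, the Codazzi equation in the constant-curvature background together with minimality kills the divergence and mean-curvature terms, and the computation collapses to $\Delta^\perp a^\nu=-\tilde A(a^\nu)$, whence $\tilde L(a^\nu)=p\,a^\nu$. Thus every $a^\nu$ is an eigenfield of $\tilde L$ with the fixed positive eigenvalue $p$, so $I$ is negative definite on $\operatorname{span}\{a^\nu:a\in\mathbb{R}^{n+1}\}$ and the index is at least the dimension of this span. To bound that dimension I note that the kernel of $a\mapsto a^\nu$ lies in $\bigcap_{x\in M}(T_xM\oplus\mathbb{R}x)$, an intersection of $(p+1)$-dimensional subspaces, so it has dimension at most $p+1$ and the image has dimension at least $(n+1)-(p+1)=n-p$. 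This gives $\operatorname{index}\ge n-p$.

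For the nullity bound I would instead use the infinitesimal isometries of the ambient sphere: each $B\in\mathfrak{so}(n+1)$ gives the Killing field $x\mapsto Bx$, whose flow preserves minimality, so its normal projection $\Psi(B):=(Bx)^\nu$ lies in $\ker\tilde L$. One checks this directly: for $B=a\wedge b$ one gets $\Psi(B)=\langle a,x\rangle b^\nu-\langle b,x\rangle a^\nu$, and using $\nabla x_a=a^\top$ with the structural relations, $\tilde L\Psi(B)=-2h(a^\top,b^\top)+2h(b^\top,a^\top)=0$ by symmetry of $h$. Hence the nullity is at least $\dim\operatorname{im}\Psi$. Assuming $M$ is full in $\mathbb{R}^{n+1}$, $\ker\Psi$ consists of ambient Killing fields everywhere tangent to $M$, which restrict injectively to Killing fields of $(M,g)$, so $\dim\ker\Psi\le\dim\mathfrak{isom}(M)\le\binom{p+1}{2}$, giving $\dim\operatorname{im}\Psi\ge\binom{n+1}{2}-\binom{p+1}{2}=(n-p)(p+1)+\binom{n-p}{2}\ge(n-p)(p+1)$. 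The non-full case, where $M\subset\mathbb{S}^m\subsetneq\mathbb{S}^n$, is handled by induction on $m$: the extra $(m+1)(n-m)$ parallel normal directions from the off-diagonal blocks of $\mathfrak{so}(n+1)$ more than compensate. This yields $\operatorname{nullity}\ge(n-p)(p+1)$.

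It remains to settle the equality (rigidity) statements, namely that each bound is strict unless $h\equiv0$, i.e. unless $M$ is a totally geodesic great sphere; I expect this to be the main obstacle. The inequalities above are saturated only if the test families already exhaust the relevant eigenspaces: $\operatorname{span}\{a^\nu\}$ must equal the full $p$-eigenspace of $\tilde L$ with no further positive eigenvalues, and $\operatorname{im}\Psi$ must coincide with $\ker\tilde L$. The plan is to argue that nonvanishing of $h$ at a point produces an additional normal variation with $I<0$ (respectively an extra Jacobi field), contradicting sharpness, and thereby force $h\equiv0$. Converting the heuristic \emph{``$h\neq0$ yields an extra unstable direction''} into a rigorous argument, as opposed to the clean eigenvalue computations behind the inequalities, is the delicate part of the proof.
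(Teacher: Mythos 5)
First, a point of reference: the paper does not prove this statement at all --- it is quoted from \cite{simons68} as background --- so there is no in-paper proof to compare against. Your proposal is, in substance, Simons' own argument: the test fields $a^\nu$ coming from constant ambient vectors (eigenfields of the stability operator with eigenvalue $p$) and the normal projections $(Bx)^\nu$ of ambient rotational Killing fields (Jacobi fields), together with the dimension counts $\dim\ker(a\mapsto a^\nu)\le p+1$ and $\dim\ker\Psi\le\dim\mathfrak{isom}(M)\le\binom{p+1}{2}$. I checked the computations $\tilde L(a^\nu)=p\,a^\nu$ and $\tilde L\Psi(a\wedge b)=-2h(a^\top,b^\top)+2h(b^\top,a^\top)=0$ against the structural relations you derive, and they are correct; the arithmetic identity $\binom{n+1}{2}-\binom{p+1}{2}=(n-p)(p+1)+\binom{n-p}{2}$ also checks out, and your inductive treatment of the non-full case closes because $(n-m)(m-p)+\binom{m-p}{2}\ge0$. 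So both inequalities are genuinely established. (Minor remark: the equality case in the statement should read ``$M$ is a totally geodesic $\mathbb{S}^p$,'' not $\mathbb{S}^n$; you implicitly correct this.)

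The genuine gap is exactly where you flag it: the rigidity statements are asserted as a plan (``$h\neq0$ yields an extra unstable direction'') but never proved, and as written the proposal does not prove the ``equality holds only when'' halves of either claim. For the index, however, the missing step is already contained in your own kernel computation and you should not treat it as delicate: the kernel of $a\mapsto a^\nu$ is $\bigcap_{x\in M}(T_xM\oplus\mathbb{R}x)$, an intersection of $(p+1)$-dimensional subspaces each containing it; if this kernel has dimension exactly $p+1$ then all the spaces $T_xM\oplus\mathbb{R}x$ coincide with a fixed $W$, forcing the closed $p$-manifold $M$ to be the great sphere $W\cap\mathbb{S}^n$, totally geodesic. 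Hence if $M$ is \emph{not} a great $p$-sphere the kernel has dimension at most $p$, the span of the $a^\nu$ is at least $(n-p+1)$-dimensional, and the index is at least $n-p+1$. No analysis of ``extra unstable directions from $h\neq0$'' is needed. The nullity rigidity is the genuinely harder half: equality forces $\dim\ker\Psi\ge\binom{p+1}{2}+\binom{n-p}{2}$, and extracting from this that $M$ is a great sphere requires controlling both the isometry group of $M$ and the rotations fixing $M$ pointwise (the non-full case), which your proposal does not do. As it stands, the proof is complete for the two lower bounds and incomplete for the characterization of equality.
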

\ni There exist further advancements on the minimal submanifolds of spheres  
in the late 20th century. 
Among them, building up on the foundations of Simons, Ejiri provided better estimates 
for the index of minimal $2$-spheres 
in higher-dimensional spheres. 
\begin{thm}\cite{ejiri83} Let M be a closed orientable minimal surface of genus zero 
immersed in $\mbb S^{2n}$ for $n>2$. Then the index of $M$ is greater than or equal to $2(n(n+2)-3)$.\end{thm}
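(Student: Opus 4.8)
The plan is to bound the index from below by exhibiting an explicit $2(n(n+2)-3)$-dimensional space of normal variations on which the second variation of area is negative definite. First I would record the Jacobi (stability) operator for a minimal surface $M^2$ immersed in the unit sphere $\mbb S^{2n}$. Since the ambient space has constant curvature $1$ and $\dim M = 2$, the second variation of area on a normal section $V$ reads
\begin{equation*}
\delta^2 A(V,V) = \int_M \left( |\n^\perp V|^2 - \langle \mc A(V), V\rangle - 2|V|^2 \right) dA,
\end{equation*}
where $\mc A(V) = \sum_{i,j}\langle A(e_i,e_j),V\rangle A(e_i,e_j)$ is the Simons operator built from the second fundamental form $A$. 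The index is the dimension of a maximal subspace of normal sections on which this quadratic form is negative definite, so producing many such directions is exactly what is needed.

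Next I would invoke the structure theory of genus-zero minimal surfaces in spheres. Treating the linearly full case as the essential one, a full minimal immersion $\mbb S^2 \to \mbb S^{2n}$ is isotropic and arises from a holomorphic directrix curve; the conformal structure on $M=\mbb S^2=\mbbcp^1$ together with the sequence of higher fundamental forms splits the complexified normal bundle into a direct sum of holomorphic line bundles. This is the crucial structural input: it lets me analyze the Jacobi operator summand by summand using complex-analytic methods rather than by brute-force eigenvalue estimates.

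With the splitting in hand, I would construct the test variations from the ambient data. The restrictions of the linear coordinate functions of $\mbbr^{2n+1}$ are eigenfunctions of $\Delta_M$ with eigenvalue $2$, and, combined with the holomorphic sections of the line-bundle summands above, they generate normal fields $V$ for which $\delta^2 A(V,V)<0$. Organizing these sections by the complex structure and counting them by Riemann--Roch on $\mbbcp^1$ --- where $\dim H^0(\mbbcp^1,\mco(d))=d+1$ for $d\ge 0$ --- the contributions should assemble to $2(n(n+2)-3)=2(n-1)(n+3)$ independent negative directions, which is the asserted bound.

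The main obstacle will be the two verifications that make this count rigorous. First, I must show that the assembled sections are genuinely negative for $\delta^2 A$ and mutually independent --- not merely non-positive, and not lying in the nullity --- which forces careful use of isotropy and of unique continuation for the relevant elliptic operators; and I must confirm that the Riemann--Roch bookkeeping over the summands yields exactly $2(n(n+2)-3)$ rather than an off-by-a-summand total. Second, the reduction from a non-full immersion to a full one in some totally geodesic $\mbb S^{2m}$ with $m<n$ is itself a genuine subtlety: the extra flat normal directions contribute through the low part of the spectrum of the \emph{induced} metric, and one must account for the eigenvalues below $2$ rather than assume monotonicity of the bound. I expect this negativity-plus-independence step, together with the clean handling of the non-full case, to be the heart of the argument.
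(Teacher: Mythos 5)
This statement is quoted from Ejiri's 1983 paper as background; the paper you are reading gives no proof of it, so there is no internal argument to compare yours against. Judged on its own, your proposal correctly identifies the skeleton of Ejiri's actual argument --- the second variation formula in a constant-curvature ambient space, Calabi-type isotropy of a full minimal $2$-sphere in $\mbb S^{2n}$, the directrix curve, the holomorphic splitting of the complexified normal bundle, and a Riemann--Roch count over $\mbbcp_1$ --- but as written it is an outline with the two decisive steps deferred, so it does not yet constitute a proof.

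Concretely: (i) Riemann--Roch produces no number until you specify the degrees of the line-bundle summands of the normal bundle. Those degrees are determined by the degrees and ramification of the osculating curves of the directrix, and the total $2(n(n+2)-3)=2(n-1)(n+3)$ emerges only after that computation; writing that the contributions ``should assemble'' to this value is asserting the conclusion rather than deriving it. (ii) Negativity on the span is not established. Exhibiting individual sections $V$ with $\delta^2A(V,V)<0$ does not give a subspace on which the form is negative \emph{definite}; you must either show your sections are eigensections of the Jacobi operator with negative eigenvalue, or prove an identity (integration by parts against the holomorphic structure, using isotropy) valid on the whole span, and separately prove linear independence, e.g.\ by unique continuation. (iii) The non-full case is not a routine reduction: if the surface is full only in a totally geodesic $\mbb S^{2m}$ with $m<n$, the bound you can import is $2(m(m+2)-3)$, and the $2(n-m)$ extra trivial normal directions contribute the number of Laplace eigenvalues of the induced metric below $2$ (with multiplicity) per direction; you need an explicit lower bound on that count to recover $2(n(n+2)-3)$, and none is supplied. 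Until (i)--(iii) are carried out, the argument is a plausible plan, not a proof.
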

\ni In this paper, we study the index and nullity of the Berger sphere on different complex Einstein manifolds, which have never been investigated in this aspect before. To achieve this, we introduce a theorem 
on the Jacobi operator of totally geodesic submanifolds in Einstein manifolds.  \vspace{2mm}

\noindent {\bf Theorem \ref{kalafat_theorem}.} 
{\em Let $f:M\to \bar{M}$ be a totally geodesic immersion of a $p$-manifold 
into an Einstein $n$-manifold. The following are true.
\beg{enumerate}
\item The Jacobi operator of $M$ is equal to  $J=-\Delta_M-(s/n)I+Ric^\perp $. 

\item If $M$ is a hypersurface, 
then  $J=-\Delta_M-(s/n) I$. 

\item If $\bar{M}$ is of constant curvature, 
then  $J=-\Delta_M-s{ p \over n(n-1)} I$. 
\end{enumerate}  } 
\ni Then we study the spectrum of the Laplacian on the related Riemannian spaces, and developed an idea to calculate the eigenvalues of the Jacobi operator. 
Along the way we also compute some higher eigenvalues for the Laplacian of the Berger spheres and provide with an algorithm.
We have made the following observation. 

\vspace{2mm}
\noindent {\bf Theorem \ref{bergerincp2indexnullity-our-theorem}.} 
{\em The Berger sphere in $\mbbcp_2$ has index $1$ and nullity $0$.}
\vspace{2mm}

\ni On the other hand on Page space we have index and nullity strictly depending on the value of the parameter of the Berger sphere. In particular they have a strictly positive index which means the Berger sphere behaves contrary to the compact Kähler submanifolds in terms of index. Next we are 
able to find the following result inside the blow up of the complex projective plane.  \vspace{3mm}

\noindent {\bf Theorem \ref{bergerinpageindexnullity}.} 
{\em The Index of the Berger sphere in the Page space is 
        \begin{equation*}
            \text{Index}(\mbb S^3_r)=\begin{cases}
                5 & \text{for }\, r_1\leq r \leq r_2 \\
                1 & \text{otherwise}
            \end{cases}
        \end{equation*}
        where $r_1$ , $r_2$ are the roots of the following function 
        $$2f^{-1}+U^2 D^{-2}\sin^{-2}r-3(1+a^2)$$ in the interval $(0,\pi)$ and the nullity is
        \begin{equation*}
            \text{Nullity}(\mbb S^3_r)=\begin{cases}
                4 & \text{for } r=r_1,r_2 \\ 
                0 & \text{otherwise}
            \end{cases}
        \end{equation*} }




\vspace{2mm} 
\ni At the end, we also give a criterion for instability of submanifolds. 
\vspace{2mm} 

\noindent {\bf Theorem \ref{instabilitycriterion}.} 
{\em Let $f:M\to \bar{M}$ be a totally geodesic immersion of a $p$-manifold 
into an Einstein $n$-manifold of positive scalar curvature. If $M$ is a hypersurface or $\bar{M}$ is of constant curvature then $M$ is unstable. 
}

\vspace{2mm} 
The paper is organized as
follows. In section $\S$\ref{secjacobi} we investigate the Jacobi operator for submanifolds in Einstein spaces, $\S$\ref{secspectrum} we review and also give new computations on the spectrum of the Laplacian, in $\S$\ref{seccp2index} and $\S$\ref{secpagesubmanifolds}   
we give applications to the complex projective space and its blow up. 
%
Finally in $\S$\ref{secstabilityminimal} 
we review the basic facts related to the stability of minimal submanifolds in a Riemannian manifold. 
\vspace{2mm}

This is a sequel to our previous papers \cite{confk,ehpbisec} and \cite{page}. Further investigations in this direction takes place in the sequel \cite{kkt2gravitationalinstantons}. 

\vspace{2mm}

\noindent{\bf Acknowledgements.} We would like to thank Tommaso Paccini and Matthias Kreck for useful discussions,   
\c{C}. 
Hac\i yusufo\u{g}lu and J. 
Madnick for their lectures on minimal surfaces. 
We devote this paper to {\em James H. Simons} whose work in mathematics and finance as well as his support of academics had a great impact on many people including us who passed away during the writing of this paper. The authors declare no conflict of interest.


\section{The Jacobi Operator}\label{secjacobi}

Whenever we have an immersion $f:M\to \bar{M}$ of a $p$-manifold into an $n$-manifold, one can consider the Jacobi operator on the sections of the normal bundle, 
$$J:C^\infty(NM)\to C^\infty(NM) ~~~\tn{defined by}~~~ J=-tr\nabla^2+\bar{R}-\tilde{S}.$$ 
Here, the third term $\tilde{S}={^\mathsf{T}}S\circ S$ is the composition of the transpose of the shape operator with itself considered as a map 
$S: NM \to Sym^2(TM)$. 
Another important term of the Jacobi operator is the so called {\em a partial Ricci operator} by \cite{simons68} acting on the normal bundle which is defined through  
$$\bar{R}V:=\sum_{i=1}^p (\bar{R}_{E_i V}E_i)^N ~~\tn{for}~~ E_1\cdots E_p \in TM_m  ~~\tn{and}~~ V\in NM_m$$
is non-zero unless the underlying manifold is Ricci-flat.  
If the ambient space is {\em Einstein} i.e. $Ric=\lambda g$, then this operator admits further decomposition. Moreover, if the 
immersed submanifold is totally 
geodesic the Jacobi operator has vanishing third term and thus admits 
the following simpler form. 


\begin{thm}
\label{kalafat_theorem} 
Let $f:M\to \bar{M}$ be a totally geodesic immersion of a $p$-manifold 
into an Einstein $n$-manifold. The following are true.
\beg{enumerate}
\item The Jacobi operator of $M$ is equal to  $J=-\Delta_M-(s/n)I+Ric^\perp $. 

\item If $M$ is a hypersurface, 
then  $J=-\Delta_M-(s/n) I$. 

\item If $\bar{M}$ is of constant curvature, 
then  $J=-\Delta_M-s{ p \over n(n-1)} I$.

\end{enumerate}
\end{thm}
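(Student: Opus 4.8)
The plan is to start from the defining expression $J=-\mathrm{tr}\,\nabla^2+\bar R-\tilde S$ and simplify each of its three terms under the two hypotheses. Because $M$ is totally geodesic, its second fundamental form and hence its shape operator $S$ vanish identically, so the third term $\tilde S={}^{\mathsf T}S\circ S$ is zero and drops out. The first term is, by the convention naming the rough (connection) Laplacian on the normal bundle, simply $-\mathrm{tr}\,\nabla^2=-\Delta_M$. Thus the identity collapses to $J=-\Delta_M+\bar R$, and the entire content of part (1) becomes the single assertion $\bar R=-(s/n)I+Ric^\perp$; everything reduces to analyzing the partial Ricci operator.

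To pin down $\bar R$ I would complete the orthonormal tangent frame $E_1,\dots,E_p$ of $T_mM$ to an orthonormal frame $E_1,\dots,E_p,\nu_1,\dots,\nu_{n-p}$ of $T_m\bar M$ by adjoining an orthonormal normal frame. Contracting the ambient curvature tensor over the \emph{full} frame reproduces (up to the sign fixed by the curvature convention) the ambient Ricci operator, so that $\sum_{a=1}^n\bar R_{e_aV}e_a=-(s/n)\,V$ for every normal $V$, the value $s/n$ being the Einstein constant. Splitting this contraction into its tangential summands and its normal summands, the tangential block is exactly $\bar R V=\sum_{i=1}^p(\bar R_{E_iV}E_i)^N$, while the normal block is $\sum_{\alpha=1}^{n-p}(\bar R_{\nu_\alpha V}\nu_\alpha)^N$. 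Using the antisymmetry $\bar R_{XY}=-\bar R_{YX}$ to rewrite the latter as $-Ric^\perp V$, where $Ric^\perp V:=\sum_{\alpha=1}^{n-p}(\bar R_{V\nu_\alpha}\nu_\alpha)^N$ is the partial Ricci operator in the normal directions, the decomposition reads $-(s/n)V=\bar RV-Ric^\perp V$, which rearranges to $\bar R=-(s/n)I+Ric^\perp$ and establishes part (1).

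For part (2) I would treat the two hypotheses separately. If $M$ is a hypersurface then $n-p=1$, the normal bundle is a line spanned by a single unit normal $\nu$, and any normal vector satisfies $V=\langle V,\nu\rangle\nu$; hence $Ric^\perp V=\langle V,\nu\rangle(\bar R_{\nu\nu}\nu)^N=0$ since $\bar R_{\nu\nu}=0$ by antisymmetry, and $J=-\Delta_M-(s/n)I$ follows at once. If instead $\bar M$ has constant curvature $\kappa$, I would substitute the explicit tensor $\bar R_{XY}Z=\kappa(\langle Y,Z\rangle X-\langle X,Z\rangle Y)$ into $Ric^\perp$; using $\sum_\alpha\langle V,\nu_\alpha\rangle\nu_\alpha=V$ the normal summands collapse to a scalar multiple of the identity, so that $Ric^\perp$ is constant and $J$ is again $-\Delta_M$ shifted by a constant potential, the shift being exactly $-s/n$ in the codimension-one situation that governs the paper's applications (the Berger sphere $\mbb S^3$ sitting as a real hypersurface in the $4$-manifolds $\mbbcp_2$ and the Page space).

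The main obstacle is the sign and symmetry bookkeeping in the second paragraph: one must carry the curvature convention consistently from the contraction that yields the Ricci operator through the antisymmetry step that converts the normal block into $-Ric^\perp$, since an error in either place flips the sign of $s/n$ or of $Ric^\perp$ and breaks the compatibility between parts (1) and (2). Once the full-frame contraction is correctly identified with the Einstein constant and the tangential/normal split is performed, the hypersurface case is immediate from antisymmetry and the constant-curvature case is a direct substitution, so the remaining work is routine.
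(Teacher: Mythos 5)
Your argument is essentially the paper's own: total geodesy kills $\tilde S$, the full-frame contraction of $\bar R$ is identified with $-Ric=-(s/n)I$ via the Einstein condition, and the tangential/normal split of that contraction yields $\bar R=-(s/n)I+Ric^\perp$, with the hypersurface case following because $Ric^\perp$ vanishes on a rank-one normal bundle. Part (1) and the hypersurface half of part (2) are therefore correct and match the paper's proof step for step.

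One remark on the constant-curvature clause, which the paper's proof silently omits and which you treat more carefully but do not (and cannot) close as stated. Substituting $\bar R_{XY}Z=\kappa(\langle Y,Z\rangle X-\langle X,Z\rangle Y)$ into the paper's definition $Ric^\perp V=-\sum_{i=1}^{n-p}(\bar R_{V_iV}V_i)^N$ gives $Ric^\perp=\kappa(n-p-1)I$, hence $\bar R=-\kappa(n-1)I+\kappa(n-p-1)I=-\kappa p\,I$, whereas $s/n=\kappa(n-1)$. So the shift is $-\kappa p$, which equals $-(s/n)$ only when $p=n-1$; this is consistent with Simons' computation for $\mbb S^p\subset\mbb S^n$, where the index $n-p$ and nullity $(n-p)(p+1)$ come precisely from the shift $-p$, not $-(n-1)$. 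Your hedge ("the shift being exactly $-s/n$ in the codimension-one situation") is the honest position: the constant-curvature case does give $J=-\Delta_M+cI$ for a constant $c$, but $c=-\kappa p$ in general, so that clause of the statement should either carry the codimension-one hypothesis or be restated with the corrected constant. None of this affects the applications in the paper, which are all codimension one.
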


\begin{proof} In the tangent space we can pointwise complete to an orthonormal basis 
$$\{E_1\cdots E_p, E_{p+1}{=}V_1 \cdots E_n{=}V_{n-p}\}$$ of the ambient space  $\bar{M}$. In order to understand the middle term of the Jacobi operator, we take the inner product of its image with an arbitrary normal basis element.  
$${
\renewcommand{\arraystretch}{2} 
\begin{array}{rcl} 
\< \bar RV,V_j\>  
& = & \<\sum_{i=1}^p (\bar{R}_{E_i V}E_i)^N,V_j\>
      =\sum_{i=1}^p \< \bar{R}_{E_i V}E_i,V_j\>\\
& = & \sum_{i=1}^p \<\bar R_{E_i V}E_i,V_j\>
      \pm \sum_{i=p+1}^n \< \bar{R}_{E_i V}E_i,V_j\> \\
& 
= & -Ric(V,V_j) - \sum_{i=1}^{n-p} \< \bar R_{V_i V}V_i,V_j\>\\
& = & \<-Ric(V),V_j\>+\<Ric^\perp(V),V_j\>,
\end{array} }$$  
where
$$Ric^\perp : NM \to NM ~~~\tn{is defined by}~~~ V \longmapsto -\sum_{i=1}^{n-p} (\bar R_{V_i V}V_i)^N$$ 
is the Ricci curvature of the normal bundle. So that the partial Ricci 
operator term is $\bar R=-Ric+Ric^\perp$. In the Einstein case the constant is $\lambda=s/n$ which is a multiple of the constant scalar curvature. This completes the first part. 

\vspace{2mm}

In the case of $p=n-1$ since the normal bundle is $1$-dimensional, 
Ricci curvature of the normal bundle vanishes. So that 
$$\bar R=-Ric=-\lambda I$$
then this operator acts as a multiple of the identity operator.  
In the stronger case of constant $k_0$ curvature, for the Einstein manifold, we have a special form of the curvature tensor  $\bar{R}_{XY}=k_0(Yi_X - Xi_Y)$. For $V\in NM_m$, 
$${\renewcommand{\arraystretch}{2} 
\begin{array}{rcl} 
Ric^\perp V 
& = & -k_0\sum_{i=1}^{n-p} (\<V,V_i\>V_i
    - \<V_i,V_i\>V)^N\\
& = & -k_0 (V-(n-p)V)\\ 
& = & -k_0 (1-n+p)) V\\ 
& = & s { n-p-1 \over n(n-1)} V. 

\end{array} }$$  

\ni Summation with the Ricci term yields the result. 
\end{proof}
\ni Along the way, we have also proved the following. 
\begin{prop} In the case of an immersion of a $p$-manifold 
into an Einstein $n$-manifold, 
the normal Ricci curvature vanishes if and only if the operator $\bar R=-(s/n) I$ i.e. a multiple of the identity operator. 
\end{prop}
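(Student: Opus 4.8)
The plan is to read the result directly off the decomposition of the partial Ricci operator that was already established inside the proof of Theorem~\ref{kalafat_theorem}. There, pairing $\bar R V$ against an arbitrary normal frame element $V_j$ and splitting the ambient orthonormal frame $\{E_1\cdots E_p, V_1\cdots V_{n-p}\}$ into its tangential and normal halves produced the operator identity $\bar R = -Ric + Ric^\perp$. Crucially, this identity was obtained by pure curvature-tensor bookkeeping and does not use the totally geodesic hypothesis — that hypothesis enters the full Jacobi operator only through the vanishing of the third term $\tilde S$. Hence the whole content of the proposition is packaged in this single equation, and the task reduces to specializing the ambient Ricci term to the Einstein case and then rearranging.

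First I would invoke the Einstein condition $Ric = \lambda g$ with $\lambda = s/n$. For a normal vector $V \in NM$ this gives $Ric(V) = (s/n)V$, which is again normal, so that on the normal bundle the operator $-Ric$ acts precisely as the scalar $-(s/n)I$. Substituting into $\bar R = -Ric + Ric^\perp$ yields the identity of endomorphisms of $NM$
$$\bar R = -(s/n)I + Ric^\perp .$$

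The equivalence now follows by transposing a single term: since $\bar R + (s/n)I = Ric^\perp$, we have $\bar R = -(s/n)I$ exactly when $Ric^\perp = 0$, and conversely. Both directions of the stated ``if and only if'' are therefore obtained at once. The one point that must be checked for rigor — and the only place where anything could go wrong — is the claim that the decomposition $\bar R = -Ric + Ric^\perp$ holds for a general immersion rather than only a totally geodesic one; so I would record that observation explicitly, after which the proposition is immediate.
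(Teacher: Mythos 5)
Your argument is correct and is exactly the route the paper takes: the proposition is stated as a byproduct (``along the way, we have also proved'') of the identity $\bar R = -Ric + Ric^{\perp}$ derived in the proof of Theorem~\ref{kalafat_theorem}, which, as you rightly note, uses only the frame-splitting computation for the partial Ricci operator and not the totally geodesic hypothesis. Specializing $-Ric$ to $-(s/n)I$ on the normal bundle via the Einstein condition and rearranging gives the equivalence, just as you wrote.
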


\section{Spectrum of the Laplacian}\label{secspectrum}

In this section, we are going to review the spectrum of the Laplacian on some compact Riemannian manifolds. Some resources on the subject are  \cite{lafonten} and \cite{bergergauduchonspektrum} or more recent \cite{hajlasz}. 
We start with the unit round $p$-sphere $\mbb S^p$.
To obtain a positive operator thus a positive spectrum, 
$$-tr\nabla^2_{\mbb S^p} \, P=~k\,(k+p-1)~P$$

\ni where the Eigenfunctions in this case, constitute the space 
$E_k=H_k\mbb R^{p+1}$ of the harmonic $k$-homogenous polynomials 
restricted from the Euclidean $(p+1)$-space.
Computation of the dimension of this space depends on the following observation\cite{axlerharmonic}. 
A $k$-homogenous polynomial in $n$-space has a harmonic part plus a part coming from the isomorphic image of lower degree polynomials $\mathcal P_{k-2}^n$ 
under the multiplication map with radius squared.   
See the Table \ref{table:specSp} for the details.
\begin{table}[ht] \caption{ {\em Spectrum of the Laplacian of the round $p$-sphere.
}}
\bct{\Large {\renewcommand*{\arraystretch}{1.7}
\hspace{-.2cm}\resizebox{15.2cm}{!}{
$\begin{array}{|c|c|c|c|c|c|c|c|}\hline
k & 0 & 1 & 2 & 3 &\cdots&k&\cdots\\\hline
Spec {: \lambda_k} & ~0~ &p& ~2\,(p+1)~ & 3\,(p+2) &~\cdots~&~k\,(k+p-1)~&~\cdots~\\\hline
~dim \,E_{\lambda_k}~ 
&~~1~~&~p+1~& (p+1)p/2-1 &~(p+1)\,p\,(p-1)\,/\,3!-(p+1)~&~~\cdots~~&
C_{k+p,p}-C_{k+p-2,p}
&~~\cdots~~\\ \hline
\end{array}$}} }\ect
\label{table:specSp} \end{table}

\ni Next, we continue with the so-called {\em squashed sphere}  or {\em the Berger sphere}. In general, one can define 
these metrics on the odd-dimensional spheres $\mbb S^{2n+1}$. The sphere is considered as the total space of the circle bundle on the complex projective space $\mbbcp_n$ of first Chern class $-1$. Then, it is furnished by the pullback of the Fubini-Study metric plus the fiber metric which makes the totally geodesic fibers equal in diameter. 
As a consequence of the fact that $3$(and also $7$)-dimensional sphere is parallelizable, on $\mbb S^3$ we have a simple expression for the Berger metric, 
$$g_\epsilon:=\sigma_1^2+\sigma_2^2+\epsilon^2\sigma_3^2.$$
Here, $\epsilon$ is a positive constant and $\{\sigma_1, \sigma_2, \sigma_3\}$ are the standard left invariant 1-forms on the Lie group $SU_2\approx \mbb S^3$. These can be expressed in terms of the variables based on the standard embedding of the 3-sphere into 4-space as follows,
$$\beg{array}{l}
r^2\sigma_1= x_1dx_0-x_0dx_1+x_2dx_3-x_3dx_2
\nonumber\\ [2\jot] 
r^2\sigma_2= x_2dx_0-x_0dx_2+x_3dx_1-x_1dx_3
\\ [2\jot] 
r^2\sigma_3= x_3dx_0-x_0dx_3+x_1dx_2-x_2dx_1
.\nonumber \label{relation5} 
\end{array}$$

\ni\cite{urakawa} is the first to compute the first nonzero eigenvalue 
for the 3-dimensional Berger spheres a particular case of the general setting of compact Lie groups. Later on, 
\cite{tanno}  
computes that for the Berger spheres in all odd dimensions which have the same volume as the round sphere of the dimension, which we call unit volume for simplicity. Restricting his results to the dimension three one uses the family of unit volume Berger metrics on $\mbb S^3$, 
$$g_B^t=t^{-1}g_1+(t^2-t^{-1})\,\sigma_3^2 ~~~\tn{for}~~ t\in \mbb R^+.$$

\ni Tanno computes the smallest positive eigenvalue for this case as follows.  

$$\lambda_1(g_B^t) =\left\{\begin{array}{cc}
 8t            & \tn{for}~~~ t \leq 1/\sqrt[3] 6  \\ 
t(2+t^{-3})& \tn{for}~~~ t \geq 1/\sqrt[3] 6 
\end{array}   \right.$$

\ni Normalizing the volume of the $\epsilon$-Berger metric we get 
$g_\epsilon^u=\epsilon^{-2/3}g_\epsilon$ which corresponds to the $t=\epsilon^{2/3}$ case of Tanno's. So that we obtain the following identification,  
$$g_\epsilon=\epsilon^{2/3} \,g^{\epsilon^{2/3}}_B.$$

\ni To be able to apply Tanno, we need one more step. Laplace-Beltrami operator on functions in local coordinates reveals the fact that it anti-transforms under the dilation of the metric, i.e. $\Delta_{\lambda g}f=\lambda^{-1}\Delta_{g}f$. 
This result is true in the case of constant $\lambda$ or if the space is a surface. To prove the latter assertion one needs to work out the general conformal scaling formula though. In the case that it holds, although the eigenfunctions stay the same, the eigenvalues are anti-multiplied by $\lambda$.  
Inserting the $t$ value and anti-multiplying we finally arrive at the first eigenvalue. 
$$\lambda_1(g_\epsilon) =\left\{\begin{array}{cc}
 8            & \tn{for}~~~ \epsilon \leq 1/\sqrt 6  \\ 
2+1/\epsilon^2& \tn{for}~~~ \epsilon \geq 1/\sqrt 6 
\end{array}   \right.$$

\vspace{2mm}

\ni 
        One can find further eigenvalues by using the following method. We need to sort eigenvalues in ascending order. To do that, for each step, we determine a minimum. To proceed further we take $m=3$, as we are interested in three-dimensional Berger spheres we consider only this dimension. As in the proof above, the minimums come from the following three cases. 
        \begin{enumerate}
            \item $tn(n+2) - t(1 - t^{-3})n^2$, \quad $n \geq 1$,
            \item $tk(k+2) - t(1 - t^{-3})$, \quad $k$ is odd and $\geq 1$,
            \item $tl(l+2)$, \quad $l$ is even and $\geq 2$.
        \end{enumerate}
        If we rearrange them we get the following curves. 
        \begin{enumerate}
            \item $\gamma_n(t)=tn(2+nt^{-3})$, \quad $n \geq 1$,
            \item $\alpha_k(t)=tk(k^2+2k-1+t^{-3})$, \quad $k$ is odd and $\geq 1$,
            \item $\beta_l(t)=tl(l+2)$, \quad $l$ is even and $\geq 2$.
        \end{enumerate}
         The first nonzero eigenvalue was found by comparing the minimums of those three and formed by splitting cases. In terms of the new notations, we can write the first eigenvalue as
         \begin{equation*}
            \lambda_1(g(t)) = 
            \begin{cases} 
                \alpha_1(t)=\gamma_1(t) & \text{for } t^{-3} \leq 6 \\
                \beta_2(t) & \text{for } t^{-3} \geq 6 
            \end{cases}.
        \end{equation*}
        To find $\lambda_2(g(t))$, we need to compare other possible curves, those are $\beta_2,\gamma_2,\alpha_3$, since $\alpha_3$ is far bigger than others, after the comparisons we have
        \begin{equation*}
            \lambda_2(g(t)) = 
            \begin{cases} 
                \gamma_2(t) & \text{for } t^{-3} \leq 1 \\
                \beta_2(t) & \text{for } t^{-3} \geq 1 
            \end{cases}.
        \end{equation*}
        If we proceed further we realize that we first start with $\beta_2(t)$, then pass to possible $\alpha_k$ which is $\alpha_1$, then pass possible $\gamma_n$, which is $\gamma_1$ but since they are equal we take both at the same time but this never be the case again. Then for the next eigenvalue, we start with $\beta_2,$ then try to pass $\alpha_3$ but $\alpha_3>\beta_2$ hence we move with $\gamma_2$. For the next eigenvalue, again the same logic: start with $\beta_2$ then move with $\gamma_3$. We proceed with this until no $\gamma$ under $\beta_2$ is left. Whenever there is no $\gamma$ left under $\beta_2$ we take $\beta_2$ for all $t$ as eigenvalue. The next step moves with $\beta_4$, then we try $\alpha_3$ as next, and other $\gamma$'s. We move with $\beta_4$ and $\alpha_3$ until no $\gamma$ left under them, then we pass two cased eigenvalue $\beta_4$ and $\alpha_3$, then we pass to the cases $\beta_4$ and other $\gamma$'s lie between $\alpha_3$ and $\beta_4$. Then we pass $\beta_4$ as eigenvalue itself. This process continues like this. 
        Roughly speaking we start our eigenvalue with $\beta_l$ as the first case then take $\alpha_{l-1}$ as the second case, and then take suitable $\gamma$ as the third case. We increase the index of $\gamma$ until they become greater than $\alpha_{l-1}$. Whenever they do, we take the next eigenvalue in the form
        \begin{equation*}
            \lambda_p(g(t)) = 
            \begin{cases} 
                \alpha_{l-1}(t) & \text{for } t^{-3} \leq A \\
                \beta_l(t) & \text{for } t^{-3} \geq A 
            \end{cases}.
        \end{equation*}
        The next few eigenvalues will have form
        \begin{equation*}
            \lambda_q(g(t)) = 
            \begin{cases} 
                \gamma_n(t) & \text{for } t^{-3} \leq A \\
                \beta_l(t) & \text{for } t^{-3} \geq A 
            \end{cases}
        \end{equation*}
        until $\gamma_n$'s start to become greater then $\beta_l$. Whenever they do we select the next eigenvalue as $\beta_l$ itself. Then we increase the index of $\beta$ and move in this algorithm. By using this algorithm, we obtained the first eleven eigenvalues as follows.
        \begin{equation*}
            \lambda_1(g(t)) = 
            \begin{cases} 
                t(2+t^{-3}) & \text{for } t^{-3} \leq 6 \\
                8t & \text{for } t^{-3} \geq 6 
            \end{cases},
        \end{equation*}
        \begin{equation*}
            \lambda_2(g(t)) = 
            \begin{cases} 
                2t(2+2t^{-3})) & \text{for } t^{-3} \leq 1 \\
                8t & \text{for } t^{-3} \geq 1
            \end{cases},
        \end{equation*}
        \begin{equation*}
            \lambda_3(g(t)) = 
            \begin{cases} 
                3t(2+3t^{-3}) & \text{for } t^{-3} \leq \frac{2}{9} \\
                8t & \text{for } t^{-3} \geq \frac{2}{9}
            \end{cases},
        \end{equation*}
        \begin{equation*}
            \lambda_4(g(t))=8t
        \end{equation*}
        \begin{equation*}
            \lambda_5(g(t)) = 
            \begin{cases} 
                4t(2+4t^{-3}) & \text{for } t^{-3}\leq\frac{2}{5}\\
                t(14+t^{-3}) & \text{for } \frac{2}{5}\leq t^{-3} \leq 10 \\
                24t & \text{for } t^{-3} \geq 10
            \end{cases},
        \end{equation*}
        \begin{equation*}
            \lambda_6(g(t)) = 
            \begin{cases} 
                5t(2+5t^{-3}) & \text{for } t^{-3}\leq\frac{1}{6}\\
                t(14+t^{-3}) & \text{for } \frac{1}{6}\leq t^{-3} \leq 10 \\
                24t & \text{for } t^{-3} \geq 10
            \end{cases},
        \end{equation*}
        \begin{equation*}
            \lambda_7(g(t)) = 
            \begin{cases} 
                6t(2+6t^{-3}) & \text{for } t^{-3}\leq\frac{2}{35}\\
                t(14+t^{-3}) & \text{for } \frac{2}{35}\leq t^{-3} \leq 10 \\
                24t & \text{for } t^{-3} \geq 10
            \end{cases},
        \end{equation*}
        \begin{equation*}
            \lambda_8(g(t)) = 
            \begin{cases} 
                t(14+t^{-3}) & \text{for } t^{-3} \leq 10 \\
                24t & \text{for } t^{-3} \geq 10
            \end{cases},
        \end{equation*}
        \begin{equation*}
            \lambda_9(g(t)) = 
            \begin{cases} 
                7t(2+7t^{-3}) & \text{for } t^{-3} \leq \frac{10}{49} \\
                24t & \text{for } t^{-3} \geq \frac{10}{49}
            \end{cases},
        \end{equation*}
        \begin{equation*}
            \lambda_{10}(g(t)) = 
            \begin{cases} 
                8t(2+8t^{-3}) & \text{for } t^{-3} \leq \frac{1}{8} \\
                24t & \text{for } t^{-3} \geq\frac{1}{8}
            \end{cases},
        \end{equation*}
        \begin{equation*}
            \lambda_{11}(g(t)) = 
            \begin{cases} 
                9t(2+9t^{-3}) & \text{for } t^{-3} \leq \frac{2}{27} \\
                24t & \text{for } t^{-3} \geq\frac{2}{27}
            \end{cases}.
        \end{equation*}
        \begin{figure}[ht]
            \centering
            \includegraphics[scale=0.6]{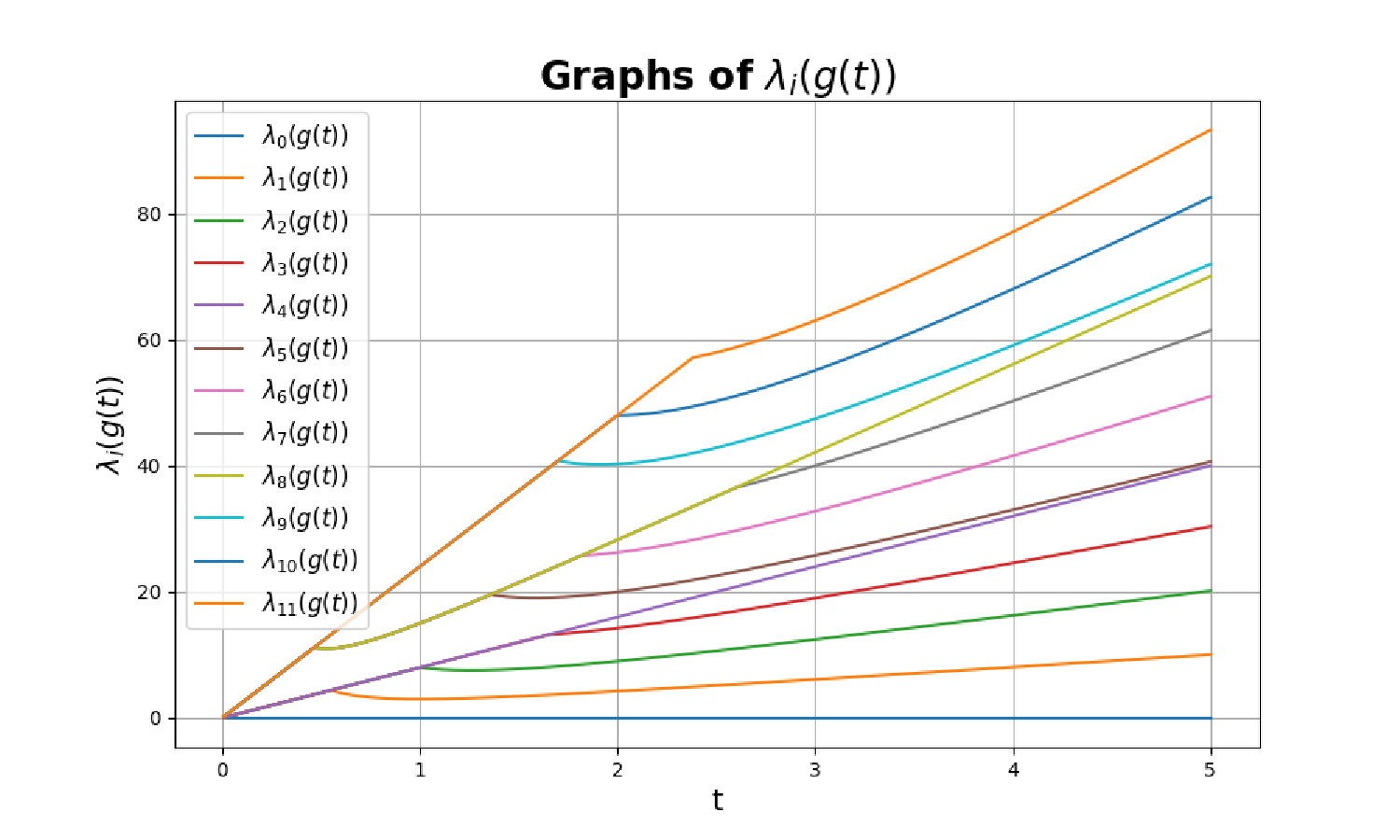}
            \caption{
            Eigenvalues for the Berger sphere Laplacians}
            \label{fig:graphs-of-lambda}
        \end{figure}
\begin{rmk} Other spectrum computations for Riemannian manifolds include complex projective spaces and tori \cite{berard-bergerybourguignonbergerlaplacaian}. 
See also \cite{Renato} for general Berger spheres.  
\end{rmk}


\newpage

\section{Index of minimal submanifolds of the complex projective space}\label{seccp2index}

In this section we apply the machinery in the previous sections to study the hypersurfaces of the complex projective plane. 
We are going to review some facts about the 
minimal submanifolds of the complex projective spaces. 
For simplicity we work on dimension two. From the Hermitian metric

$$G=(1+|z_1|^2+|z_2|^2)^{-2} \left[\begin{array}{@{}cc@{}}
1+|z_2|^2& -\bar z_1z_2\\ 
-z_1 \bar z_2& 1+|z_1|^2\end{array}\right]$$

\ni one gets the real part as, 
\begin{multline} (1+x_1^2+y_1^2+x_2^2+y_2^2)\,
g_{\mathrm{FS}}=(1+x_2^2+y_2^2)\{dx_1^2+dy_1^2\} 
+(1+x_1^2+y_1^2)\{dx_2^2+dy_2^2\} \nonumber \\  
-(x_1x_2+y_1y_2)\,\{dx_1\otimes dx_2+dx_2\otimes dx_1+dy_1\otimes dy_2+dy_2\otimes dy_1\}.\end{multline}

\ni An orthogonal coframe for the Fubini-Study metric is as follows. 
\begin{equation}\label{vierbeinfso}
\{e^{1},e^{2},e^{3},e^{4}\}:=\{dx_1,\, dy_1,\, 
dx_2-{x_1y_2-x_2y_1\over 1+x_1^2+y_1^2}dx_1,\, 
dy_2-{x_1y_2-x_2y_1\over 1+x_1^2+y_1^2} dy_1\}. \nonumber 
\end{equation}

\ni When we have a K\"ahler manifold, complex submanifolds  are homologically 
volume minimizing among other submanifolds with the same boundary by the 
result of \cite{federer65} who seems to be the first to show that. Thus it makes 
the index equal to zero for complex submanifolds. Later on,  
\cite{simons68} was able to show that the Jacobi fields are holomorphic 
sections of the normal bundle thus able to express the nullity by the 
number of these sections. 
For a complex surface $S$ with a smooth, connected complex curve $C$ in it, 
the adjunction formula \cite{gompfstipsicz} reads as follows. 
$$2g(C) - 2 = [C]^2 - c_1(S)[C]$$
If we consider $g=0$ in $\mbb{CP}_2$ the only possibilities for the degree of the embedding is 
$d=1,2$. In these cases the self intersections become $[C]^2 = 1,4$. 
\begin{thm} The degree\,=\,$1,2$ 
embeddings of  $\mbbcp_1$ in $\mbbcp_2$ has nullity\,=\,$1,4$.  The linear embedding of $\mbbcp_{n-1}$ in $\mbbcp_n$ is totally geodesic and has nullity\,=\,$1$. All of these embeddings have index zero. \end{thm}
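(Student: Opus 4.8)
The plan is to handle the two assertions by the two different tools just recalled: Federer's minimizing property for the index, and Simons' identification of Jacobi fields with holomorphic normal sections for the nullity.

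For the index I would argue uniformly. Each submanifold in the statement---a line or a conic in $\mbbcp_2$, and the linear $\mbbcp_{n-1}$ in $\mbbcp_n$---is a compact complex submanifold of a K\"ahler manifold carrying the Fubini--Study metric. By the cited result of \cite{federer65} such a submanifold is homologically volume minimizing, hence stable, so its Jacobi operator is nonnegative and the index vanishes. This settles the last sentence at once, and it also reduces the nullity to the dimension of the kernel of the (now nonnegative) operator $J$.

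For the nullity I would pass to the holomorphic picture. By Simons' theorem the Jacobi fields of a complex submanifold are precisely the holomorphic sections of its normal bundle, so the nullity is read off from $H^0(C,N_C)$. The normal bundle is pinned down by the topology: for a smooth rational curve $C\subset\mbbcp_2$ the degree of $N_C$ equals the self-intersection $[C]^2$, which the adjunction computation already recorded gives as $1$ and $4$ for $d=1,2$. Since $C\cong\mbbcp_1$, this means $N_C\cong\mco(1)$ and $N_C\cong\mco(4)$, and the section count then follows from Riemann--Roch, equivalently from the explicit monomial basis of $H^0(\mbbcp_1,\mco(k))$. For the linear $\mbbcp_{n-1}\subset\mbbcp_n$ I would first record that it is totally geodesic---being a component of the fixed-point set of an isometric involution of the symmetric space $\mbbcp_n$---and that its normal bundle is $\mco(1)$; Theorem \ref{kalafat_theorem} then writes its Jacobi operator purely through $-\Delta$, the Einstein constant $s/n$, and the normal Ricci term, giving an independent handle on the kernel via the spectra assembled in \S\ref{secspectrum}.

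The routine parts are the minimizing argument and the identification $\deg N_C=[C]^2$ together with the line-bundle cohomology of projective space. The step I expect to require real care is the section count itself: matching the analytic nullity, i.e. the dimension of the zero-eigenspace of $J$, with the holomorphic datum of $N_C$ means fixing the correct normalization---complex versus real dimension, and whether a trivial section is to be discarded---so that $\mco(1)$ and $\mco(4)$ yield exactly $1$ and $4$ and the totally geodesic case yields $1$. Tying Simons' holomorphic count precisely to the definition of nullity as $\dim\ker J$ is, I anticipate, the only delicate point; once that normalization is fixed, the remainder is bookkeeping with adjunction and with the spectra of \S\ref{secspectrum}.
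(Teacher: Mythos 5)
Your outline matches the paper's own route exactly: the paper derives index zero from Federer's homological volume-minimizing property, invokes Simons' identification of Jacobi fields with holomorphic normal sections for the nullity, and uses adjunction to get $[C]^2=1,4$. The problem is the step you yourself flag as ``the only delicate point'': it is not a normalization issue that can be fixed, and your proposal cannot close it as written. For a smooth rational curve of degree $d$ in $\mbbcp_2$ one has $N_C\cong\mco(d^2)$ restricted to $C\cong\mbbcp_1$, so $h^0(\mbbcp_1,\mco(1))=2$ and $h^0(\mbbcp_1,\mco(4))=5$; as real dimensions these are $4$ and $10$. None of the options you list (complex versus real dimension, discarding a trivial section) turns $2$ and $5$ into $1$ and $4$. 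The numbers in the statement are exactly $[C]^2=\deg N_C=h^0(N_C)-1$, i.e.\ what the paper reads off directly from the adjunction computation without ever performing a section count. The same mismatch occurs in the last clause: the normal bundle of the linear $\mbbcp_{n-1}\subset\mbbcp_n$ is $\mco(1)$ with $h^0=n$, not $1$. So the holomorphic-section count you propose, carried out honestly via Riemann--Roch, produces different numbers from those asserted; to land on $1$ and $4$ you would need either a different theorem relating nullity to $\deg N_C$ rather than to $h^0(N_C)$, or an argument for why all but a $\deg N_C$-dimensional subspace of holomorphic sections fails to contribute to $\ker J$. Neither is supplied, and the paper supplies neither.

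Two smaller points. First, your geometric sanity check actually argues against the stated numbers: lines in $\mbbcp_2$ move in a $4$-real-dimensional family and conics in a $10$-real-dimensional family, and these deformations through minimal (indeed complex) submanifolds generate genuine Jacobi fields, so the nullity should be at least $4$, respectively $10$ --- consistent with $2h^0(N_C)$ and not with $1$ and $4$. Second, your suggestion to treat the totally geodesic $\mbbcp_{n-1}\subset\mbbcp_n$ via Theorem \ref{kalafat_theorem} and the spectra of \S\ref{secspectrum} is a genuinely different (and in principle checkable) route that the paper does not take, but note that $\mbbcp_{n-1}$ has real codimension $2$, so for $n>1$ it is not a hypersurface in the sense of part (2) of that theorem and the $Ric^\perp$ term must be computed; you would also need the spectrum of $\Delta$ on $\mbbcp_{n-1}$ acting on sections of $\mco(1)$, which \S\ref{secspectrum} does not provide.
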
 
\vspace{2mm}
\ni On the other hand, for the real hypersurcase case, as an application of the lemmas 
and theorems in the previous sections, 
we obtain the following result.
    \begin{thm}\label{bergerincp2indexnullity-our-theorem}
        The Berger sphere in $\mbbcp_2$ has index $1$ and nullity $0$.
    \end{thm}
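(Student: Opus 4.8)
The plan is to realize the Berger sphere as a hypersurface of the Einstein $4$-manifold $(\mbbcp_2, g_{\mathrm{FS}})$ and then feed it directly into Theorem~\ref{kalafat_theorem}. A hypersurface has a $1$-dimensional normal bundle, so after checking that the relevant sphere enters as a totally geodesic hypersurface, part~(2) of that theorem applies and collapses the Jacobi operator to $J=-\Delta_{\mbb S^3}-(s/n)I$. Trivializing the normal line bundle by a unit normal field identifies $C^\infty(NM)$ with ordinary functions $C^\infty(\mbb S^3)$, so $J$ becomes the scalar operator $-\Delta_{\mbb S^3}-s/n$ acting on functions. This is the conceptual heart of the argument: once $J$ is a constant shift of the Laplacian, its spectrum is completely determined by the Laplace spectrum of the Berger metric computed in Section~\ref{secspectrum}.

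With this reduction in hand I would compute the index and nullity purely spectrally. The eigenvalues of $J$ are exactly the numbers $\lambda_k-s/n$, where $\lambda_k$ runs over the eigenvalues of $-\Delta_{\mbb S^3}$ supplied by Tanno's formula. Hence the index equals the number, counted with multiplicity, of Laplace eigenvalues strictly below the Einstein constant $s/n$, and the nullity equals the multiplicity of $s/n$ itself as a Laplace eigenvalue. The constant functions contribute $\lambda_0=0$, giving the eigenvalue $-s/n<0$ of $J$ with multiplicity one (here we use that $\mbbcp_2$ has positive scalar curvature); this already forces the index to be at least $1$. To finish, it suffices to show that the first nonzero eigenvalue $\lambda_1(g_\epsilon)$ lies strictly above $s/n$, so that every remaining eigenvalue of $J$ is strictly positive.

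The delicate step, and the one I expect to be the main obstacle, is the bookkeeping of normalizations. One must record the precise Berger parameter $\epsilon$ carried by the induced metric and compute the scalar curvature $s$ of the Fubini--Study metric in the same normalization, so that the inequality $\lambda_1(g_\epsilon)>s/n$ is a genuine numerical statement rather than a scaling artifact; both quantities can be read off from the explicit metric in coordinates recorded at the start of Section~\ref{seccp2index} together with the orthonormal coframe there. Plugging $\epsilon$ into Tanno's piecewise expression for $\lambda_1$ and comparing with $s/n$ is the crux, since it is exactly the strictness of $0<s/n<\lambda_1(g_\epsilon)$ that separates the desired conclusion from the borderline situation in which $s/n$ coincides with a Laplace eigenvalue and the nullity jumps. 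Once that strict inequality is verified, the only non-positive eigenvalue of $J$ is the simple one coming from the constants, yielding index $1$ and nullity $0$.
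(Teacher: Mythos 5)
Your proposal is correct and follows essentially the same route as the paper's own proof: reduce via part~(2) of Theorem~\ref{kalafat_theorem} to the scalar operator $-\Delta_{\mbb S^3}-(s/n)I$ on functions, obtain index $\geq 1$ from the constant functions, and then verify the strict inequality $\lambda_1 > s/n$ by volume-normalizing the induced Berger metric so that Tanno's formula applies and scaling the eigenvalue back. The normalization bookkeeping you single out as the crux is exactly where the paper's proof spends its effort (computing the scaling factor $\mu=r^2(1+r^2)^{-4/3}$, the identification $t=(1+r^2)^{-1/3}$, and concluding that $\lambda_1(r)-3/2>0$ for all $r>0$), so the only thing left implicit in your plan is that final explicit numerical check.
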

    
    \begin{proof}
        We prove this by finding the eigenvalues of Berger sphere in $\mbbcp_2$, then by using Theorem \ref{kalafat_theorem}, evaluate the eigenvalues of the Jacobi operator. Let us start with zero eigenvalue $\lambda_0=0$, by using Theorem \ref{kalafat_theorem}, we can see that the first eigenvalue of the Jacobi operator is $-\frac{3}{2}$, and the dimension is $1$ by \cite{tanno}. Now we continue with nonzero eigenvalues. We apply Tanno's idea to our Berger sphere, note that Tanno has parametrized the metric as
        \begin{equation*}
            t^{-1}g+(t^{2n}-t^{-1})\eta\otimes\eta
        \end{equation*}
        here if we consider $\mbb S^3$, we have $\sigma_3$ produced from a unit Killing vector field because the coefficient of the metric is constant, hence all $\sigma_1,\ \sigma_2,\ \sigma_3$ are Killing, hence we can use it as $\eta$. Then metric becomes
        \begin{equation}\label{tanno-last-metric}
            t^{-1}(\sigma_1^2+\sigma_2^2)+t^2\sigma_3^2.
        \end{equation}
        The problem is, our Berger sphere metric does not come from a unit sphere thus we cannot calculate the first eigenvalue for that by using directly Tanno's idea. Instead, we rescale our metric such that the resulting metric's volume is the same as $\mbb S^3$'s volume. Remember that the metric for Berger sphere in $\mbbcp_2$ is
        \begin{equation*}
            g_{BS}=f(\sigma_1^2+\sigma_2^2)+r^2V\sigma_3^2
        \end{equation*}
        where $r$ is fixed real number and $f={r^2}/(1+r^2)$, $V=(1+r^2)^{-2}$.
        To rescale this metric, we first calculate the volume.
        \begin{equation*}
            \sqrt{\det(g_{\text{BS}})}=\frac{r^3}{(1+r^2)^2},
        \end{equation*}
        hence 
        \begin{equation*}
            Vol(g_{\text{BS}})=\frac{r^3}{(1+r^2)^2}Vol(\bar{g})
        \end{equation*}
        where $\bar{g}$ is the metric for the Berger sphere in unit sphere $\mbb S^3$ with the volume same as the volume of $\mbb S^3$. Hence we have a scaling factor 
        \begin{equation*}
            \mu=\frac{r^2}{(1+r^2)^{4/3}}
        \end{equation*}
        as the scaling of metric by $\mu$ impacts volume by $\mu^{n/2}$ where $n$ is the dimension. At the end of the day, we have the relation
        \begin{equation*}
            g_{\text{BS}}=\frac{r^2}{(1+r^2)^{4/3}}\,\Bar{g}
        \end{equation*}
        where $\Bar{g}$ is the Berger sphere induced from unit sphere $\mbb S^3$ and has the same volume with $\mbb S^3$. This metric can be explicitly given as
        \begin{equation*}
            \bar{g}=(1+r^2)^{1/3}(\sigma_1^2+\sigma_2^2)+(1+r^2)^{-2/3}\sigma_3^2.
        \end{equation*}
        To receive Tanno's idea we need to agree with it in notation. If we compare the metrics above and in the equation \ref{tanno-last-metric}, we have the relationship between $t$ and $r$ as $t=(1+r^2)^{-1/3}$. Hence we can give the first eigenvalue for the Laplacian for $\bar{g}$ as
        \begin{equation*}
            \Bar{\lambda}_1(r)=\begin{cases} 
                (3+r^2)(1+r^2)^{-1/3} & \text{for }r \leq \sqrt{5} \\
                8(1+r^2)^{-1/3} & \text{for }  r\geq \sqrt{5}
            \end{cases}
        \end{equation*}
        to calculate the index of the Jacobi operator, we need to calculate the first eigenvalue for $g_{\text{BS}}$. To do that we need to use the fact that the Laplacians of the metrics scaled by $\mu$ are related with the Laplacian of the original metric by ${1}/{\mu}$. Hence we need to multiply the first eigenvalue for the Laplacian of $\Bar{g}$ by $r^{-2}{(1+r^2)^{4/3}}$. Therefore the first eigenvalue for $g_{\text{BS}}$ can be given as
        \begin{equation*}
            \lambda_1(r)=\begin{cases} 
                \frac{3+r^2}{r^2}(1+r^2) & \text{for }r \leq \sqrt{5} \\
                8\frac{1+r^2}{r^2} & \text{for }  r\geq \sqrt{5}
            \end{cases}.
        \end{equation*}
        For the index of the Jacobi operator, by Theorem \ref{kalafat_theorem}, we need to find the dimensions of eigenspaces of negative eigenvalues of $J=\Delta_{BS}-{3}/{2} \,I$. The first eigenvalue for $J$ is 
        \begin{equation*}
            \lambda_1(r)-{3}/{2}=\begin{cases} 
                \frac{3+r^2}{r^2}(1+r^2)-\frac{3}{2} & \text{for }r \leq \sqrt{5} \\
                8\frac{1+r^2}{r^2}-\frac{3}{2} & \text{for }  r\geq \sqrt{5}
            \end{cases}
        \end{equation*}
        which is strictly positive for all values of $r>0$. Hence the index is $1$ and since there is no zero eigenvalue, the nullity of the Berger sphere metric $g_{\text{BS}}$ is zero.
        \begin{figure}[ht]
            \centering
            \includegraphics[scale=0.6]{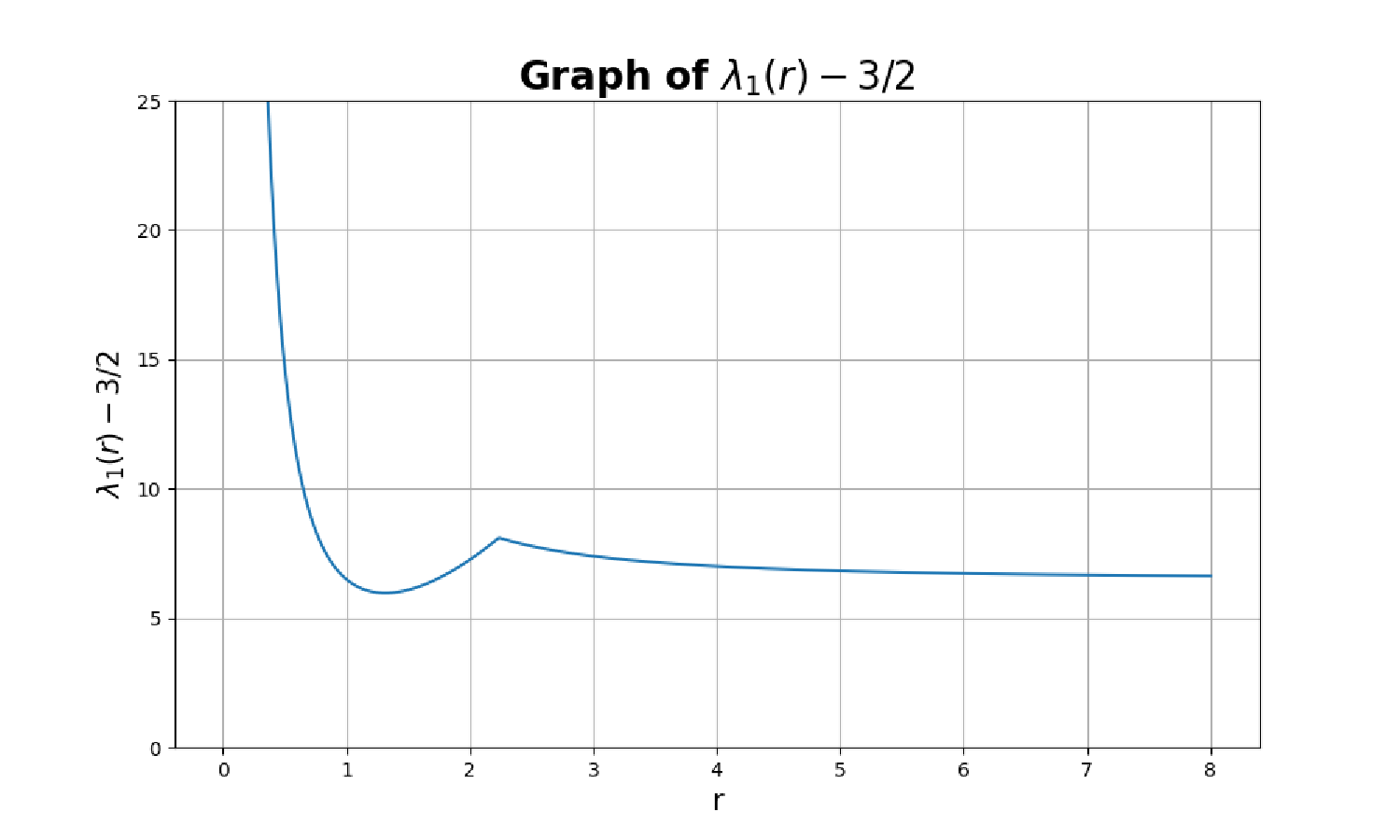}
            \caption{The first eigenvalue of the Jacobi operator}
            \label{fig:eigenvalue-jacobi}
        \end{figure}
    \end{proof}
\section{Submanifolds of the Page Space}\label{secpagesubmanifolds}

    In this section, similar to the index section for $\mbbcp_2$, we use the Theorem \ref{kalafat_theorem}, and Tanno's idea we reach out to the index of the Berger sphere in the Page space. First, the Berger sphere in Page space can be given by fixing the $r$ coordinate in Page metric. The resulting metric is given by
    
    \begin{equation*}
        g_{\text{BS}}=f(\sigma_1^2+\sigma_2^2)+\frac{C\sin^2r}{V}\sigma_3^2
    \end{equation*}
    which is totally geodesic in Page space by \cite{page}. Here, the possible values for $r$ is the interval $(0,\pi)$.
    \begin{thm}\label{bergerinpageindexnullity}
        The Index of the Berger sphere in the Page space is 
        \begin{equation*}
            \text{Index}(\mbb S^3_r)=\begin{cases}
                5 & \text{for }\, r_1\leq r \leq r_2 \\
                1 & \text{otherwise}
            \end{cases}
        \end{equation*}
        where $r_1$ , $r_2$ are the roots of the following function 
        $$2f^{-1}+U^2 D^{-2}\sin^{-2}r-3(1+a^2)$$ in the interval $(0,\pi)$ and the nullity is
        \begin{equation*}
            \text{Nullity}(\mbb S^3_r)=\begin{cases}
                4 & \text{for } r=r_1,r_2 \\ 
                0 & \text{otherwise}
            \end{cases}
        \end{equation*}
    \end{thm}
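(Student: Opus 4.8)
The plan is to follow the same route as in the proof of Theorem \ref{bergerincp2indexnullity-our-theorem}, adapting each step to the Page geometry. Since the Berger sphere $\mbb S^3_r$ is a totally geodesic hypersurface in the Page space (by \cite{page}), part (2) of Theorem \ref{kalafat_theorem} applies verbatim and gives the Jacobi operator as $J = \Delta_{BS} - (s/n)I$ with $n = 4$, where $\Delta_{BS}$ denotes the nonnegative Laplacian of the induced metric $g_{BS} = f(\sigma_1^2 + \sigma_2^2) + (C\sin^2 r/V)\sigma_3^2$ and $s/4$ is the Einstein constant of the Page metric. The first step is thus to express $s/4$ explicitly in terms of the Page parameter $a$; since the eigenvalues of $J$ are the $\Delta_{BS}$-eigenvalues shifted down by this constant, counting negative and zero eigenvalues of $J$ reduces to comparing the Laplace spectrum of $g_{BS}$ against the threshold $s/4$.

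To obtain that spectrum I would rescale $g_{BS}$ to a unit-volume Berger metric $\bar g$, exactly as in the $\mbbcp_2$ computation, so that Tanno's formulas \cite{tanno} become available; reading off the resulting dictionary between Tanno's parameter $t$ and the coordinate $r$, and then anti-transforming the eigenvalues back to $g_{BS}$ through the dilation rule $\Delta_{\lambda g} = \lambda^{-1}\Delta_g$, produces each eigenvalue of $\Delta_{BS}$ as an explicit function of $r$. The constant eigenfunction sits at $\Delta_{BS}$-eigenvalue $0$, hence at $J$-eigenvalue $-s/4 < 0$ (the Page space has positive scalar curvature), contributing $1$ to the index for every $r \in (0,\pi)$ and never to the nullity.

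The decisive contribution comes from the first nonzero eigenvalue of $\Delta_{BS}$, which in the relevant parameter range is inherited from the spin-$\tfrac12$ representation of $SU_2$ --- spanned by the four ambient coordinate functions restricted to the sphere --- and because the fiber weight enters only through its square, the associated eigenspace keeps dimension $4$ under the whole Berger deformation. I would show that the corresponding $J$-eigenvalue $\lambda_1(r) - s/4$ has, after clearing manifestly positive factors, the same sign as $2f^{-1} + U^2 D^{-2}\sin^{-2}r - 3(1+a^2)$. Since $\sin^{-2}r \to \infty$ at the ends of $(0,\pi)$, this function is positive near $0$ and $\pi$ and dips below zero on a middle interval, so it has exactly two roots $r_1 < r_2$; it is negative on $(r_1,r_2)$, zero at the endpoints, and positive outside. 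Negativity there produces four extra negative eigenvalues, giving index $1 + 4 = 5$, while vanishing at $r_1, r_2$ produces a $4$-dimensional kernel, giving nullity $4$; elsewhere both extra contributions disappear and one is left with index $1$ and nullity $0$.

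The main obstacle is the bookkeeping that guarantees the count is exactly $5$ (and not $4$ or more) throughout $(r_1, r_2)$. Two things must be checked with care. First, the Page curvature computation has to be pushed far enough that the Einstein constant and the volume-rescaling factor recombine precisely into the stated function $2f^{-1} + U^2 D^{-2}\sin^{-2}r - 3(1+a^2)$; this is where the explicit Page structure functions $f, U, D, V, C$ and the parameter $a$ enter and the algebra is least forgiving. Second --- and this is the genuinely delicate point --- I must confirm that on the crossing interval the first nonzero eigenvalue is indeed the multiplicity-$4$ branch (rather than the lower-multiplicity branch that Tanno's formula selects for other parameter ranges) and that every higher eigenvalue of $\Delta_{BS}$ stays strictly above $s/4$ for all $r \in (0,\pi)$. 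Establishing this ordering of the full Tanno spectrum relative to the threshold is the step on which the clean jump from index $1$ to index $5$ ultimately rests.
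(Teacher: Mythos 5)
Your proposal follows essentially the same route as the paper's proof: identify $J=\Delta_{BS}-3(1+a^2)I$ via Theorem \ref{kalafat_theorem}, rescale $g_{BS}$ to a unit-volume Berger metric to import Tanno's first-eigenvalue formula, anti-transform back, and count the constant mode (index $1$ everywhere) plus the multiplicity-$4$ first nonzero mode whose sign change across $r_1,r_2$ produces the jump to index $5$ and the nullity $4$ at the crossing points. The only difference is presentational: the paper locates $r_1,r_2$ numerically by bisection and checks the ordering of higher eigenvalues against the threshold by plotting them, whereas you flag these as the delicate verifications to be carried out.
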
  

    \begin{proof}
        As in the proof for $\mbbcp_2$ we first normalize the Berger sphere. The volume for the Berger sphere is
        \begin{equation*}
            Vol(g_{BS})=\frac{fD\sin r}{U}Vol(\bar{g})
        \end{equation*}
        where $\bar{g}$ is the metric for the Berger sphere in the unit sphere $\mbb S^3$ with the volume same as the volume of $\mbb S^3$.
        Hence we have a scaling factor 
        \begin{equation*}
            \mu=\left(\frac{fD\sin r}{U}\right)^{2/3}
        \end{equation*}
        Hence we get
        \begin{equation*}
            g_{\text{BS}}=\left(\frac{fD\sin r}{U}\right)^{2/3}\Bar{g}
        \end{equation*}
        We can give the metric $\bar{g}$ explicitly as
        \begin{equation*}
            \bar{g}=\left(\frac{D\sin r}{U}\right)^{-2/3}f^{1/3}(\sigma_1^2+\sigma_2^2)+\left(\frac{D\sin r}{U}\right)^{4/3}f^{-2/3}\sigma_3^2.
        \end{equation*}
         If we compare the metrics above and in the equation \ref{tanno-last-metric}, we have the relationship between $t$ and $r$ as $t=U^{-2/3}\left({D\sin r}\right)^{2/3}f^{-1/3}$. Hence we can give the first nonzero eigenvalue for the Laplacian for $\bar{g}$ as
        \begin{equation*}
            \Bar{\lambda}_1(r)=\begin{cases} 
                (2+\left(\frac{D\sin r}{U}\right)^{-2}f)\left(\frac{D\sin r}{U}\right)^{2/3}f^{-1/3} & \text{for }\left(\frac{D\sin r}{U}\right)^{-2}f \leq \sqrt{6} \\
                8\left(\frac{D\sin r}{U}\right)^{2/3}f^{-1/3} & \text{for }\left(\frac{D\sin r}{U}\right)^{-2}f\geq \sqrt{6}
            \end{cases}
        \end{equation*}
        to calculate the index of the Jacobi operator, we need to calculate the first nonzero eigenvalue for $g_{\text{BS}}$. To do that we need to multiply the first eigenvalue for the Laplacian of $\Bar{g}$ by $\mu^{-1}=U^{2/3}\left({fD\sin r }\right)^{-2/3}$. Now we have the first eigenvalue for $g_{\text{BS}}$.
        For the index of the Jacobi operator, by Theorem \ref{kalafat_theorem}, we need to find the dimensions of eigenspaces of negative eigenvalues of $J=\Delta_{BS}-{s}/{4}\, I$, by \cite{page}, we know the scalar curvature for Page space is $12(1+ a^2) \simeq 12.952.$ If we apply those, we see that the first eigenvalue for the Jacobi operator is $\lambda_0(r)-3(1+a^2)$, and the second eigenvalue is
        \begin{equation*}
            \lambda_1(r)-3(1+ a^2)=
                \begin{cases} 
                \left(2+\left(\frac{D\sin r}{U}\right)^{-2}f\right)f^{-1}-3(1+a^2) & \text{for }\left(\frac{D\sin r}{U}\right)^{-2}f \leq \sqrt{6} \\
                8f^{-1}-3(1+a^2) & \text{for }\left(\frac{D\sin r}{U}\right)^{-2}f\geq \sqrt{6}
            \end{cases}
        \end{equation*}
        If we calculate the roots for $\lambda_1(r)-3(1+a^2)$ by using bisector method with tolerance $10^{-6}$, we get the roots $r_1\simeq 0.7032761573791504$ and $r_2=2.4383171081542976$. Hence the index and nullity for the interval $(0,r_1)$ and $(r_2,\pi)$ are $1$ and zero, respectively as the dimension of the eigenspace for $\lambda_0(r)$ is $1$, however for $(r_1,r_2)$ the index is greater than the other two intervals. If we sketch the other eigenvalues for the Jacobi operator we get the following graph.
        \begin{figure}[ht]
            \centering
            \includegraphics[scale=0.6]{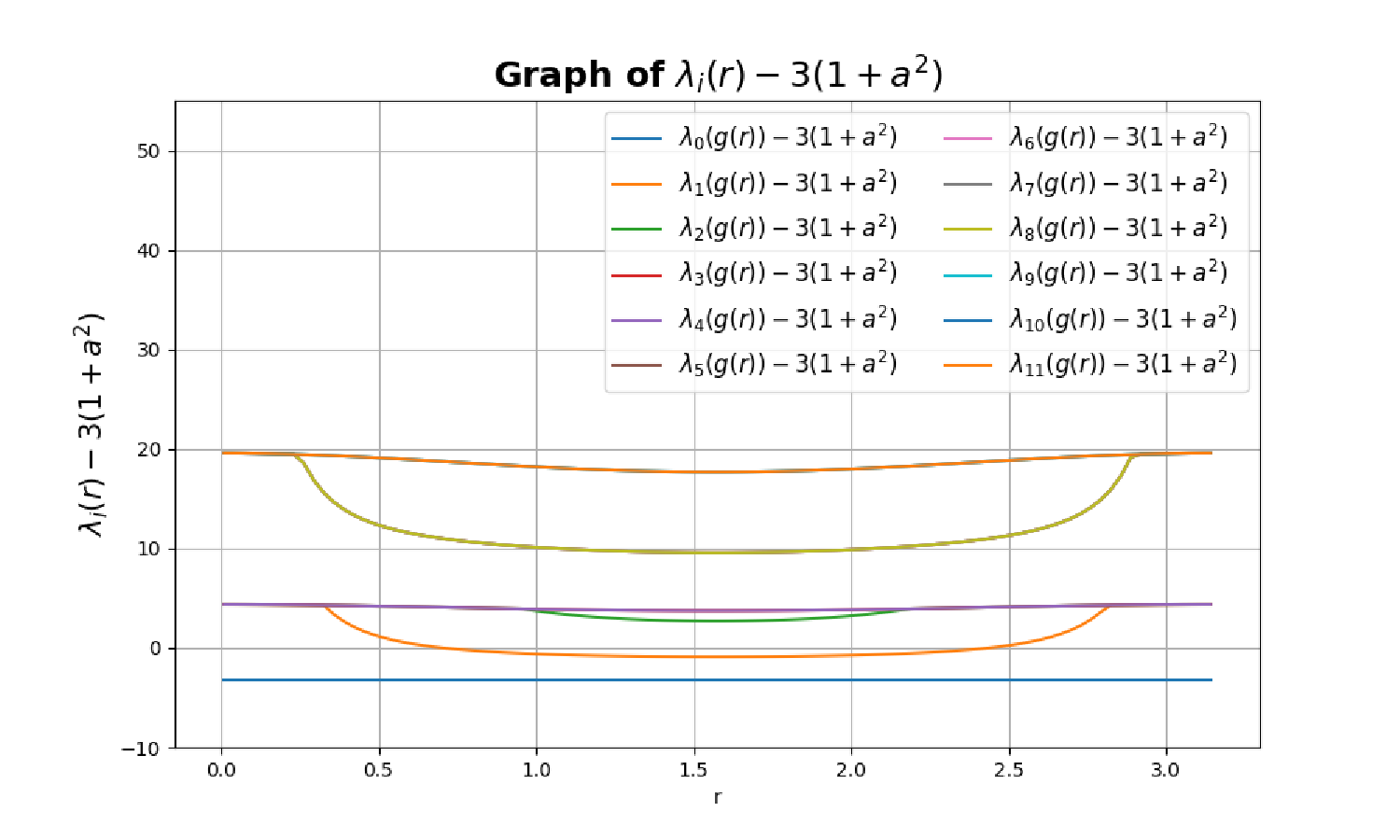}
            \caption{
            Eigenvalues of the Jacobi operator for the Berger spheres in Page space}
            \label{fig:lambda-page}
        \end{figure}
        Some eigenvalues are overlapped due to the interval for the cases. One can see that nullity is nonzero only for two values of $r$, and index is $5$ for $(r_1,r_2)$ as the dimension for the eigenspace of $\lambda_1(r)$ is $m+1=4$ by \cite{tanno}. Hence the index on this interval is $5$, on the outer part is $1$.
            \end{proof}

\section{Stability of minimal submanifolds}\label{secstabilityminimal}

The second variational formula for the volume in terms of compactly supported 
normal variations is computed to be \cite{peterlibook}, 

$$\hspace{-10mm}\frac{d^2}{dt^2}V(N_t)|_{t=0} = \int_N\{ -\sum_{i,j}\langle T,II_{i,j}\rangle^2 
-\sum_{i=1}^n\langle \mathcal R_{e_iT}T,e_i\rangle + \langle \nabla_T^nT,H\rangle + \sum_{i=1}^n\sum_{\alpha=n+1}^m\langle \nabla_{e_i}T,e_\alpha\rangle + \langle T,H\rangle \}.$$

\ni Since complex submanifolds of K\"{a}hler manifolds are also K\"{a}hler, by the theorem of \cite{simons68} complex submanifolds of the complex projective space has zero index. There exist no negative eigenvalue for the Jacobi operator. So that there are no directions in the normal bundle to deform are to make area smaller. So that deformations of them are initially area increasing which we call {\em stable}. This is contrary to the case of the round spheres living inside round spheres. The Theorem \ref{simonsunstabilityinsphere} of Simons that we have cited in the introduction implies that all minimal submanifolds of the round sphere is unstable. A criterion can be given for instability using results from section 
$\S$\ref{secjacobi}.  
\begin{thm}\label{instabilitycriterion}   
Let $f:M\to \bar{M}$ be a totally geodesic immersion of a $p$-manifold 
into an Einstein $n$-manifold of positive scalar curvature. If $M$ is a hypersurface or $\bar{M}$ is of constant curvature then $M$ is unstable. 
\end{thm}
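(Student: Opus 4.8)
The plan is to reduce instability to the existence of a single normal variation along which the second variation of area is negative, and to manufacture such a variation from a parallel normal section. Under the hypotheses of the theorem, part (2) of Theorem \ref{kalafat_theorem} applies verbatim, so the Jacobi operator takes the simple form $J=-\Delta_M-(s/n)I$, where $-\Delta_M=\nabla^*\nabla$ is the nonnegative connection Laplacian on the normal bundle and, since $\bar M$ is Einstein with \emph{positive} scalar curvature, the constant $s/n>0$. Recall from $\S\ref{secstabilityminimal}$ that $M$ is unstable precisely when $J$ possesses a negative eigenvalue, equivalently when the quadratic form
$$Q(V)=\int_M \langle JV,V\rangle=\int_M \big(|\nabla^\perp V|^2-(s/n)|V|^2\big)$$
is negative for some section $V$ of the normal bundle; the integration-by-parts identity $\int_M\langle \nabla^*\nabla V,V\rangle=\int_M|\nabla^\perp V|^2$ on the closed manifold $M$ is what rewrites $J$ in this form.

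The key step is then to produce a \emph{parallel} normal section, i.e. one with $\nabla^\perp V\equiv 0$, for which the gradient term disappears and $Q(V)=-(s/n)\int_M|V|^2<0$. I would treat the two hypotheses separately. When $M$ is a hypersurface the normal bundle is a line bundle carrying a globally defined unit normal $\nu$; differentiating $\langle\nu,\nu\rangle\equiv 1$ and using one-dimensionality of the fibre forces $\nabla^\perp\nu=0$, so $\nu$ is the desired parallel section and one may simply take $V=\nu$. When $\bar M$ has constant curvature I would invoke the Ricci equation for the normal connection: total geodesy makes the shape operator vanish, so $R^\perp(X,Y)\xi=(\bar R(X,Y)\xi)^\perp$, and the constant-curvature form of $\bar R$ annihilates this because $X,Y$ are tangent while $\xi$ is normal. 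Hence the normal bundle is flat, and a parallel normal section exists in the relevant cases such as totally geodesic spheres in round spheres, where the normal bundle is trivial.

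With a parallel section in hand the conclusion is immediate from the displayed computation of $Q$: its negativity exhibits a normal variation that decreases area to second order, so $M$ is unstable, which is exactly the assertion. In the hypersurface case this is fully explicit, $Q(\nu)=-(s/n)\,\mathrm{Vol}(M)<0$, and the recovered eigenvalue $-(s/n)$ of $J$ matches the first Jacobi eigenvalue seen in the $\mbbcp_2$ and Page computations.

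I expect the genuine obstacle to be the \emph{global} existence of the parallel normal section, not the underlying differential-geometric identities, which are routine: both $\nabla^\perp\nu=0$ for a unit normal and the flatness $R^\perp=0$ follow in a line. Flatness alone only guarantees parallel sections after passing to the universal cover or when the normal holonomy fixes a nonzero vector, which is automatic for two-sided hypersurfaces and for great spheres in spherical space forms (trivial normal bundle) but not in full generality. I would therefore either phrase the argument under the standing assumption that the normal bundle is two-sided/orientable, or remark explicitly that the proof uses a parallel normal section whose existence is ensured precisely in the two settings singled out by the hypotheses.
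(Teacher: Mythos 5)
Your argument is essentially the paper's own: the paper observes that in case (2) of Theorem \ref{kalafat_theorem} the Jacobi operator is $-\Delta_M-(s/n)I$, that the smallest eigenvalue of the Laplacian is zero (attained on constant functions, i.e.\ exactly your parallel unit normal in the hypersurface case), and that shifting by $-s/n<0$ produces a negative eigenvalue and hence index at least one. Your closing caveat --- that in the constant-curvature, higher-codimension case flatness of the normal bundle does not by itself furnish a global parallel section, so one needs trivial normal holonomy (or a two-sidedness/orientability hypothesis, or a passage to a cover) --- is a genuine subtlety that the paper's one-line appeal to ``constant functions'' silently elides, and is worth keeping.
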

\begin{proof} This falls into the second case of the Theorem \ref{kalafat_theorem}. Since the Laplacian has nonnegative (and discrete) eigenvalues, smallest one is zero with multiplicity one, the dimension of the constant functions. This yields a negative eigenvalue for the Jacobi operator after shifting by the scalar curvature. Consequently the index is greater than or equal to one. \end{proof}

\ni As an application of this corollary, we can say that the totally geodesic hypersurfaces of $\mathbb{CP}_2$ and the Page space are unstable, in particular the Berger spheres that we are interested in this paper. 
Our direct computation in section $\S$\ref{seccp2index} and $\S$\ref{secpagesubmanifolds} is an alternative verification of this result. 
The minimal submanifolds in this class behave more like the ones in spheres in terms of this respect rather than complex submanifolds. 





\bibliographystyle{alphaurl}
\bibliography{minimal}


\vspace{.05in}




{\small
\begin{flushleft}
\textsc{Mathematisches Institut, 
Rheinische Friedrich-Wilhelms-Universität Bonn
Endenicher Allee 60, 
D-53115 Bonn, 
Germany
.}\\ 
\textit{E-mail address:} 
\texttt{\textbf{kalafat@\,math.uni-bonn.de, merttsdmr@\,uni-bonn.de}}

\end{flushleft}
}

{\small 
\begin{flushleft} \textsc{Department of Basic Sciences \& Faculty of Engineering, \\ University of Turkish Aeronautical Association,  Ankara,  Turkiye
}\\
\textit{E-mail address:} \texttt{\textbf{okelekci@\,thk.edu.tr}}
\end{flushleft}
}

\end{document}